\documentclass{article}

\usepackage[utf8]{inputenc}
\usepackage{amsmath, amsfonts, amsthm, mathtools}
\usepackage{babel}
\usepackage{hyperref}
\usepackage[hyphenbreaks]{breakurl}
\usepackage{authblk}

\usepackage{tikz}
\usetikzlibrary {arrows.meta, positioning, math}

\newtheorem{theorem}{Theorem}[section]
\newtheorem{lemma}[theorem]{Lemma}

\newcommand{\tikzmark}[1]{\tikz[overlay, remember picture] \coordinate (#1);}

\usepackage{biblatex}

\title{Fast matrix representation for Clifford algebras}
\author{Gleb Rumyantsev}
\date{2024}

\affil{\href{mailto:h1m3@proton.me}{h1m3@proton.me}}

\hyphenation{
	coeffi-cients
	re-pre-sen-ta-tion
	ge-ne-ra-li-zed
	imple-men-ted
	de-com-posi-tion
	re-pre-sen-ta-tions
	fur-ther-mo-re
	algo-rithms
}

\begin{document}

\maketitle

\begin{abstract}
In this paper, we present two fast matrix representation algorithms based on the recursive decomposition of multivectors into specific right and left ideals. We also examine the relation between these two representations. Furthermore, we derive the explicit forms of the fundamental (anti)automorphisms of these Clifford algebra representations, and the efficient methods to compute them. The algorithms have been implemented in Rust and are available as the Cargo crate \textit{clifft} on Github, released under the MIT license.
\end{abstract}

\section{Introduction}

The term \emph{fast matrix representation} in this paper refers to the algorithm that implements homomorphism function $F: {Cl(p,q) \rightarrow M}$ where $M$ is some matrix algebra. The algorithm input is an array of coefficients for every k-blade of a multivector, and its output is the matrix.

This is extremely useful because the element-wise multiplication of the coefficients can only be done with $O(N^2)$ complexity, where $N$ is the number of coefficients. The matrices, however, can be multiplied much faster. So, fast matrix representation plays the same role for Clifford Algebras as Fast Fourier Transform does for polynomials.

The right-spinor representation discussed in this paper is precisely the generalized FFT proposed by P. Leopardi \autocite{leopardiGeneralizedFFTClifford2005}. In addition, we show that there is a similar algorithm to represent the algebra in the left-spinor basis, and explore the relation between the two representations. Futhermore, we derive the explicit form of the fundamental (anti)automorphisms of Clifford algebras in matrix form, and the efficient ways to compute them. The algorithms are implemented in Rust and are available as the Cargo crate \textit{clifft} on Github \autocite{rumyantsevClifft2024}.

We mostly focus on $Cl(n,n)$ algebras over field $\mathbb{K}$ of characteristic $\ne 2$. These algebras are the easiest ones to work with. They are also particularly useful, since they naturally occur as the Clifford algebras over $V \oplus V^*$ \autocite{gualtieriGeneralizedComplexGeometry2004}. Such algebras have even been applied for describing the Standard model symmetries from purely geometrical standpoint \autocite{stoicaStandardModelAlgebra2018}.

Other Clifford algebras can be simply viewed as (or being isomorphic to) sub-algebras of $Cl(n,n)$ for the appropriate $n$. Explicit ways to construct their representations from the $Cl(n,n)$ representations are shown in \autocite{sMatrixRepresentationsClifford2023}. More general properties of such algebras with a split bilinear form and their relations to other algebras can be found in \autocite{meinrenkenCliffordAlgebrasLie2013}.

This paper follows the notation conventions below:
\begin{itemize}
	\item
	Basis vectors that are squared to $-1$ are written with tilde: $\tilde{e}_i$, and those that square to $1$ as $e_i$.
	\item
	In the $Cl(n,n)$ algebra, basis vectors are enumerated from $e_{0}$ to $e_{n-1}$ and $\tilde{e}_0$ to $\tilde{e}_{n-1}$.
	\item
	$\alpha$ denotes the parity automorphism, defined by ${\alpha(e_i) = -e_i}$, ${\alpha(\tilde{e}_i) = -\tilde{e}_i}$ for all basis vectors.
	\item
	$\beta$ denotes the parity flip of all negative-valued basis vectors: ${\beta(e_i) = e_i}$, ${\beta(\tilde{e}_i) = -\tilde{e}_i}$.
	\item The star operator $A^*$ refers to the reversal of the multivector $A$.
\end{itemize}

\section{The fast matrix representation}

First, we go through some useful properties of the algebras.

\begin{lemma}
\label{thm:alphacommute}
	For every 1-vector $e$ and multivector $A$ such that no blades of $A$ contain $e$,
	\[ e A = \alpha(A)e \]
\end{lemma}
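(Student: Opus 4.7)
The plan is to reduce the statement to a single blade by linearity, then push $e$ across that blade one basis factor at a time using the anticommutation relation of distinct orthogonal basis vectors.

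First I would note that both sides of the identity are $\mathbb{K}$-linear in $A$, so it suffices to prove the claim when $A$ is a single $k$-blade not containing $e$. Writing such a blade as $A = f_1 f_2 \cdots f_k$ where each $f_i$ is a basis vector distinct from $e$, I would invoke the defining relation of the Clifford algebra for an orthonormal basis: for distinct basis vectors $e$ and $f_i$ the symmetric bilinear form vanishes, hence $e f_i + f_i e = 0$, i.e.\ $e f_i = -f_i e$. This is the crux and it holds regardless of whether $e$ or $f_i$ squares to $+1$ or $-1$, so it covers both $e_j$ and $\tilde e_j$ uniformly.

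Next I would apply this anticommutation $k$ times in succession, moving $e$ from the far left to the far right of the product $f_1 \cdots f_k$, each transposition contributing a factor of $-1$:
\[
e A = e f_1 f_2 \cdots f_k = (-1)^k f_1 f_2 \cdots f_k e = (-1)^k A e.
\]
Since $A$ is a $k$-blade and $\alpha$ is the grade-involution sending every basis vector to its negative, we have $\alpha(A) = (-1)^k A$, so $(-1)^k A e = \alpha(A) e$, which gives the desired identity for a single blade. Extending back by linearity in $A$ finishes the proof.

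I do not anticipate a real obstacle here; the only thing worth being explicit about is interpreting the phrase ``no blades of $A$ contain $e$'' as meaning that, after expanding $A$ in the standard basis of blades, every basis factor in every blade is distinct from the chosen basis $1$-vector $e$. That interpretation is exactly what makes the anticommutation $e f_i = -f_i e$ available for every factor encountered.
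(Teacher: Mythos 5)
Your proposal is correct and is essentially the paper's own argument, just spelled out in more detail: the paper's one-line proof (each blade commutes or anticommutes with $e$ according to the parity of its grade) is exactly your $(-1)^k$ computation obtained by transposing $e$ past each of the $k$ anticommuting basis factors. No gap here.
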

\begin{proof}
	Each blade in $A$ anticommutes with $e$ if its grade is odd, and commutes with $e$ if its grade is even.
\end{proof}

Consider $Cl(p+1, q+1)$ over a field $\mathbb{K}$ with characteristic $\ne 2$. In this algebra, there exists an orthogonal pair of projectors
\begin{equation}
\begin{array}{l}
	P_+ = \frac{1}{2}(1 + \tilde{e} e) \\
	P_- = \frac{1}{2}(1 - \tilde{e} e) \\
\end{array}
\end{equation}

where $e,\tilde{e}$ are basis vectors, and $e^2 = 1, \tilde{e}^2 = -1$.
The projectors $P_\pm$ commute with all k-vectors orthogonal to $e$ and $\tilde{e}$. Relations between the projectors and $e,\tilde{e}$ are as follows:
\begin{equation}
\begin{array}{l}
	P_+ e = P_+ \tilde{e} \\
	P_- e = - P_- \tilde{e}  \\
	e P_\pm = P_\mp e \\
\end{array}
\label{eqn:relations}
\end{equation}

\subsection{Right-spinor basis}

Let us decompose multivectors $a,b \in Cl(p+1, q+1)$ and $c = ab$ as
\begin{equation}
\begin{array}{l}
	a = P_+ a_0^+ + P_+ e a_1^+ + P_- a_0^- + P_- e a_1^- \\
	b = P_+ b_0^+ + P_+ e b_1^+ + P_- b_0^- + P_- e b_1^- \\
	c = P_+ c_0^+ + P_+ e c_1^+ + P_- c_0^- + P_- e c_1^- \\
\end{array}
\label{eqn:rdec}
\end{equation}	
where $a_i^\pm, b_i^\pm, c_i^\pm \in Cl(p,q)$ sub-algebra that doesn't contain \(e\) and \(\tilde{e}\).
Consider a product $c = a b$ under such decomposition, then (using lemma \ref{thm:alphacommute}):
\begin{equation}
\begin{array}{l}
	P_+ c_0^+ = {P_+ a_0^+ P_+ b_0^+} + {P_+ e a_1^+ P_- e b_1^-}
	= P_+ \left({a_0^+ b_0^+} + {\alpha(a_1^+) b_1^-} \right) \\
	P_- c_0^- = {P_- a_0^- P_- b_0^-} + {P_- e a_1^- P_+ e b_1^+}
	= P_-\left(a_0^- b_0^- + \alpha(a_1^-) b_1^+ \right)\\
	P_+ e c_1^+ = {P_+ e a_1^+ P_- b_0^-} + {P_+ a_0^+ P_+ e b_1^+} 
	= P_+ e \left({a_1^+ b_0^-} + {\alpha(a_0^+) b_1^+}\right)\\
	P_- e c_1^- = {P_- e a_1^- P_+ b_0^+} + {P_- a_0^- P_- e b_1^-} 
	= P_- e \left({a_1^- b_0^+} + {\alpha(a_0^-) b_1^-}\right)\\
\end{array}
\end{equation}

This contains a lot of seemingly independent variables. However, by applying the parity automorphism to 2 of the equations, we obtain the following equations:
\begin{equation}
\begin{array}{l}
	c_0^+ = {a_0^+ b_0^+} + {\alpha(a_1^+) b_1^-} \\
	\alpha (c_0^-) = \alpha(a_0^-) \alpha(b_0^-) + a_1^- \alpha(b_1^+)\\
	\alpha (c_1^+) = \alpha(a_1^+) \alpha(b_0^-) + a_0^+ \alpha(b_1^+)\\
	c_1^- = {a_1^- b_0^+} + {\alpha(a_0^-) b_1^-} \\
\end{array}
\end{equation}
	
This can be rewritten in matrix form:
\begin{equation}
\begin{bmatrix}
	c_0^+ & \alpha(c_1^+) \\
	c_1^- & \alpha(c_0^-) \\
\end{bmatrix}
=
\begin{bmatrix}
	a_0^+ & \alpha(a_1^+) \\
	a_1^- & \alpha(a_0^-) \\
\end{bmatrix}
\begin{bmatrix}
	b_0^+ & \alpha(b_1^+) \\
	b_1^- & \alpha(b_0^-) \\
\end{bmatrix}
\end{equation}
thus providing us with a way to build an isomorphism $Cl(p+1, q+1) \cong M_2(Cl(p, q))$.

When a multivector is provided by an array of coefficients, it can be naturally viewed as consisting of the 4 parts, depending on the inclusion of $e, \tilde{e}$ in the blades:
\begin{equation}
a = A_{00} + e A_{01} + \tilde{e} A_{10} + \tilde{e}e A_{11}
\label{eqn:rblades}
\end{equation}

Its projections by $P_+, P_-$ are (using the relations \eqref{eqn:relations}):
\begin{equation}
\begin{array}{l}
	P_+a = P_+A_{00} + P_+ e A_{01} + P_+ e A_{10} + P_+ A_{11} \\
	P_-a = P_+A_{00} + P_- e A_{01} - P_- e A_{10} - P_- A_{11} \\
\end{array}
\end{equation}

Thus, the two decompositions are related as follows:
\begin{equation}
\begin{matrix*}[l]
	a_0^+ = A_{00} + A_{11}, & a_1^+ = A_{01} + A_{10}, \\
	a_0^- = A_{00} - A_{11}, & a_1^- = A_{01} - A_{10}. \\
\end{matrix*}
\end{equation}

So, the matrix representation of $a$ is
\begin{equation}
\begin{pmatrix}
	A_{00} + A_{11} & \alpha(A_{01} + A_{10}) \\
	A_{01} - A_{10} & \alpha(A_{00} - A_{11})
\end{pmatrix} {}
\end{equation}

The inverse relation is
\begin{equation}
\begin{array}{ll}
	A_{00} = \frac{1}{2}(a_0^+ + a_0^-) &
	A_{01} = \frac{1}{2}(a_1^+ + a_1^-) \\
	A_{10} = \frac{1}{2}(a_1^+ - a_1^-) &
	A_{11} = \frac{1}{2}(a_0^+ - a_0^-) \\
\end{array}
\end{equation}

With that, the original array of coefficients can be restored from the matrix.

\subsection{Fast matrix representation for Cl(n,n)}

\begin{itemize}
	\item 
	For $Cl(0,0) \cong \mathbb{K}$ matrix representation is just the scalar: $F(a) = (a)$.
	\item
	For $n \ge 1$, $a \in Cl(n,n)$ we get the recurrent formula:
	\begin{equation}
		\begin{split}
		F_{n}(A_{00} + e_{n-1} A_{01} + \tilde{e}_{n-1} A_{10} + \tilde{e}_{n-1} e_{n-1} A_{11}) = \\
		\hfill =
		\begin{bmatrix}
			F_{n-1}(A_{00} + A_{11}) & F_{n-1}\left(\alpha(A_{01} + A_{10})\right) \\
			F_{n-1}(A_{01} - A_{10}) & F_{n-1}\left(\alpha(A_{00} - A_{11})\right)
		\end{bmatrix}
		\end{split}
	\end{equation}
\end{itemize}

This can be nicely depicted as diagram \ref{fig:clifft}.

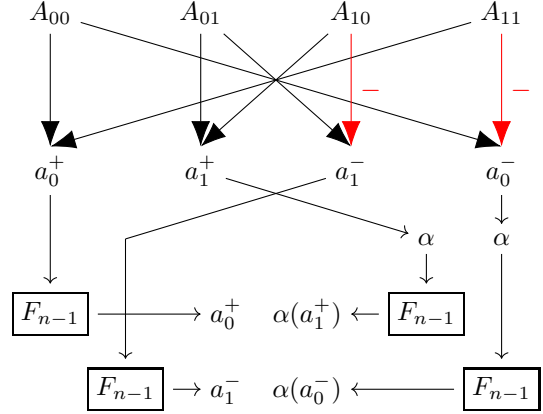
\begin{figure}[h]
\centering
\begin{tikzpicture}
	\node (A00) at (0,0) {$A_{00}$};
	\node (A01) at (2,0)  {$A_{01}$};
	\node (A10) at (4,0) {$A_{10}$};
	\node (A11) at (6,0) {$A_{11}$};
	
	\node (a) [below=1.5 of A00] {$a_0^+$};
	\node (b) [below=1.5 of A01] {$a_1^+$};
	\node (c) [below=1.5 of A10] {$a_1^-$};
	\node (d) [below=1.5 of A11] {$a_0^-$};
	
	\node (dp) at (6, -3) {$\alpha$};
	\node (bp) at (5, -3) {$\alpha$};
	\coordinate (cp) at (1, -3);
	
	\draw [-{Latex[scale=2]}] (A00) -- (a.north);
	\draw [-{Latex[scale=2]}] (A11) -- (a.north);
	\draw [-{Latex[scale=2]}] (A00) -- (d.north);
	\draw [-{Latex[scale=2]}, red] (A11) -- (d.north) node [midway, right] {$-$};
	
	\draw [-{Latex[scale=2]}] (A01) -- (b.north);
	\draw [-{Latex[scale=2]}] (A10) -- (b.north);
	\draw [-{Latex[scale=2]}] (A01) -- (c.north);
	\draw [-{Latex[scale=2]}, red] (A10) -- (c.north) node [midway, right] {$-$};
	
	\node (m00) at (2, -4) [right] {$a_0^+$};
	\node (m01) at (4, -4) [left]  {$\alpha(a_1^+)$};
	\node (m10) at (2, -5) [right] {$a_1^-$};
	\node (m11) at (4, -5) [left]  {$\alpha(a_0^-)$};
	
	\node (f00) at (0, -4) {\fbox{$F_{n-1}$}};
	\node (f10) at (1, -5) {\fbox{$F_{n-1}$}};
	\node (f01) at (5, -4) {\fbox{$F_{n-1}$}};
	\node (f11) at (6, -5) {\fbox{$F_{n-1}$}};
	
	\draw [->] (d) -- (dp);
	\draw [->] (b) -- (bp);
	\draw [-] (c) -- (cp);
	
	\draw [->] (a) -- (f00);
	\draw [->] (cp) -- (f10);
	\draw [->] (bp) -- (f01);
	\draw [->] (dp) -- (f11);	
	
	\draw [->] (f00) -- (m00);
	\draw [->] (f10) -- (m10);
	\draw [->] (f01) -- (m01);
	\draw [->] (f11) -- (m11);	
\end{tikzpicture}
\caption{Fast matrix representation diagram}
\label{fig:clifft}
\end{figure}

The structure of this algorithm closely resembles the Fast Walsh-Hadamard transform \autocite{shanksComputationFastWalshFourier1969}, but it operates on quarters of the transformed object instead of halves.

The inverse transform is shown on the diagram \ref{fig:iclifft}.

\begin{figure}[h]
\centering
\begin{tikzpicture}	
	\node (A00) at (0,0) {$2A_{00}$};
	\node (A01) at (2,0)  {$2A_{01}$};
	\node (A10) at (4,0) {$2A_{10}$};
	\node (A11) at (6,0) {$2A_{11}$};
		
	\node (AA00) at (0,-1) {$A_{00}$};
	\node (AA01) at (2,-1)  {$A_{01}$};
	\node (AA10) at (4,-1) {$A_{10}$};
	\node (AA11) at (6,-1) {$A_{11}$};
	
	\draw [->] (A00) -- (AA00) node [midway, right] {$/2$};
	\draw [->] (A01) -- (AA01) node [midway, right] {$/2$};
	\draw [->] (A10) -- (AA10) node [midway, right] {$/2$};
	\draw [->] (A11) -- (AA11) node [midway, right] {$/2$};
	
	\node (a) [above=1.5 of A00] {$a_0^+$};
	\node (b) [above=1.5 of A01] {$a_1^+$};
	\node (c) [above=1.5 of A10] {$a_1^-$};
	\node (d) [above=1.5 of A11] {$a_0^-$};
	
	\node (dp) at (6, 3) {$\alpha$};
	\node (bp) at (5, 3) {$\alpha$};
	\coordinate (cp) at (1, 3);
	
	\draw [{Latex[scale=2]}-] (A00.north) -- (a);
	\draw [{Latex[scale=2]}-] (A11.north) -- (a);
	\draw [{Latex[scale=2]}-] (A00.north) -- (d);
	\draw [{Latex[scale=2]}-, red] (A11.north) -- (d) node [midway, right] {$-$};
	
	\draw [{Latex[scale=2]}-] (A01.north) -- (b);
	\draw [{Latex[scale=2]}-] (A10.north) -- (b);
	\draw [{Latex[scale=2]}-] (A01.north) -- (c);
	\draw [{Latex[scale=2]}-, red] (A10.north) -- (c) node [midway, right] {$-$};
	
	\node (m00) at (2, 5) [right] {$a_0^+$};
	\node (m01) at (4, 5) [left]  {$\alpha(a_1^+)$};
	\node (m10) at (2, 4) [right] {$a_1^-$};
	\node (m11) at (4, 4) [left]  {$\alpha(a_0^-)$};
	
	\node (f00) at (0, 5) {\fbox{$F^{-1}_{n-1}$}};
	\node (f10) at (1, 4) {\fbox{$F^{-1}_{n-1}$}};
	\node (f01) at (5, 5) {\fbox{$F^{-1}_{n-1}$}};
	\node (f11) at (6, 4) {\fbox{$F^{-1}_{n-1}$}};
	
	\draw [<-] (d) -- (dp);
	\draw [<-] (b) -- (bp);
	\draw [<-] (c) -- (cp);
	
	\draw [<-] (a) -- (f00);
	\draw [-] (cp) -- (f10);
	\draw [<-] (bp) -- (f01);
	\draw [<-] (dp) -- (f11);	
	
	\draw [<-] (f00) -- (m00);
	\draw [<-] (f10) -- (m10);
	\draw [<-] (f01) -- (m01);
	\draw [<-] (f11) -- (m11);	
\end{tikzpicture}
\caption{Fast inverse matrix representation diagram}
\label{fig:iclifft}
\end{figure}
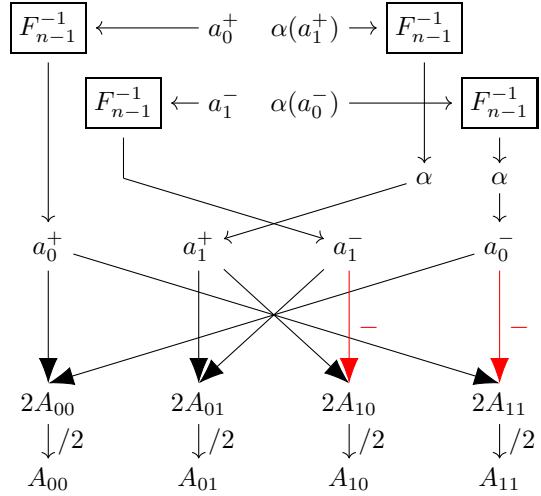

\begin{lemma}
\label{thm:symmetry}
\(\forall n, 0 \le k < n\): \(F_n(e_k)\) is symmetric, \(F_n(\tilde{e}_k)\) is antisymmetric.
\end{lemma}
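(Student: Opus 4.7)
The plan is to prove both claims simultaneously by induction on $n$, splitting the inductive step into two cases depending on whether $k = n-1$ or $k < n-1$.

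For the base case $n=1$, there is only $k=0$, and a direct application of the recursive formula with the quadruple $(A_{00},A_{01},A_{10},A_{11})$ equal to $(0,1,0,0)$ for $e_0$ and $(0,0,1,0)$ for $\tilde{e}_0$ yields
\[
F_1(e_0) = \begin{bmatrix} 0 & 1 \\ 1 & 0 \end{bmatrix}, \qquad F_1(\tilde{e}_0) = \begin{bmatrix} 0 & 1 \\ -1 & 0 \end{bmatrix},
\]
which are symmetric and antisymmetric respectively.

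For the inductive step, assume the statement holds at level $n-1$. In the subcase $k = n-1$, the decomposition \eqref{eqn:rblades} assigns $e_{n-1}$ the quadruple $(0,1,0,0)$ and $\tilde{e}_{n-1}$ the quadruple $(0,0,1,0)$; feeding these through the recursive formula gives block anti-diagonal matrices with blocks $F_{n-1}(\pm 1)$, i.e.\ $\pm$ the identity, producing the same two $2\times 2$ block patterns as in the base case and thus the desired (anti)symmetry.

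In the subcase $k < n-1$, the basis vector $e_k$ (resp. $\tilde{e}_k$) lies entirely in the $A_{00}$ slot of the decomposition. The recursive formula then reduces to a block-diagonal matrix
\[
F_n(e_k) = \begin{bmatrix} F_{n-1}(e_k) & 0 \\ 0 & F_{n-1}(\alpha(e_k)) \end{bmatrix} = \begin{bmatrix} F_{n-1}(e_k) & 0 \\ 0 & -F_{n-1}(e_k) \end{bmatrix},
\]
and analogously for $\tilde{e}_k$. Applying the induction hypothesis to the diagonal blocks transports the (anti)symmetry from level $n-1$ to level $n$, completing the induction. There is no real obstacle here; the only point requiring a little care is the case split on $k$, since $e_{n-1}$ and $\tilde{e}_{n-1}$ live in the off-diagonal slots of the decomposition whereas the lower-indexed generators sit in the diagonal slot, making the two cases structurally different.
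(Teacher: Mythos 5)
Your proof is correct and follows essentially the same induction and case split ($k=n-1$ versus $k<n-1$) as the paper's own argument. In fact your lower-right block $F_{n-1}(\alpha(e_k)) = -F_{n-1}(e_k)$ is the accurate output of the recursion; the paper's displayed block-diagonal matrices omit this $\alpha$, a harmless slip since negation preserves both symmetry and antisymmetry.
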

\begin{proof} 
\(
F_k(e_{k-1}) =
\begin{bmatrix}
	F_{k-1}(0) & F_{k-1}(1) \\
	F_{k-1}(1) & F_{k-1}(0) \\
\end{bmatrix}
\) is symmetric for all $k$. \\
For \({0 \le j < k-1}\), \(
F_k(e_j) =
\begin{bmatrix}
	F_{k-1}(e_j) & F_{k-1}(0) \\
	F_{k-1}(0) & F_{k-1}(e_j) \\
\end{bmatrix}
\)
each $F_{k-1}(e_j)$ is symmetric by induction. 

Similarly, 
\(F_k(\tilde{e}_{k-1}) =
\begin{bmatrix}
	F_{k-1}(0) & F_{k-1}(1) \\
	F_{k-1}(-1) & F_{k-1}(0) \\
\end{bmatrix}
\) is antisymmetric for all $k$, and 
for \({0 \le j < k-1}\), \(
F_k(\tilde{e}_j) =
\begin{bmatrix}
	F_{k-1}(\tilde{e}_j) & F_{k-1}(0) \\
	F_{k-1}(0) & F_{k-1}(\tilde{e}_j) \\
\end{bmatrix}
\)
each $F_{k-1}(\tilde{e}_j)$ is antisymmetric by induction. 
\end{proof}

\subsection{Algorithm complexity}

The representation of $Cl(n+1,n+1)$ involves 4 coefficient-wise additions ($2^{2n} = 4^n$ additions each), 2 parity automorphisms in $Cl(n,n)$, each involving $2^{2n}$ operations with $O(1)$ complexity, and 4 representation operation for $Cl(n,n)$ multivectors.

Assuming that both the addition of 2 numbers and the sign flip of a single number for the parity automorphism have complexity of 1, the complexity of the representation algorithm $C_{n+1}$ can be expressed as the recurrence formula:
\begin{equation}
C_{n+1} = 6 \cdot 4^n + 4 C_{n}
\end{equation}

Then,
\begin{equation}
C_{n+1} = 6 \cdot 4^n + 4(6 \cdot 4^{n-1} + 4(6 \cdot 4^{n-2} + 4 (...))) \sim 6 \cdot n \cdot 4^n
\end{equation}

Or, simply
\begin{equation}
C_n = O(n \cdot 4^n) = O(\log(N) \cdot N)
\end{equation}
where $N = 4^n$ is the number of coefficients in the $Cl(n,n)$ multivector.

The inverse transform algorithm consists of exactly the same operations, and, therefore, has the same complexity.

\subsection{Left-spinor basis}

The exact same reasoning can be applied for the decomposition into the left ideals:
\begin{equation}
a = \bar{a}_0^+ P_+ + \bar{a}_1^+ e P_+ + \bar{a}_0^- P_- + \bar{a}_1^- e P_-
\label{eqn:ldec}
\end{equation}

Using relations \eqref{eqn:relations},
\begin{equation}
a = a_0^+ P_+ + \alpha(a_1^-) e P_+ + a_0^- P_- + \alpha(a_1^+) e P_-
\end{equation}

Thus,
\begin{equation}
\begin{bmatrix}
	\bar{a}_0^+ & \bar{a}_1^- \\
	\alpha(\bar{a}_1^+) & \alpha(\bar{a}_0^-) \\
\end{bmatrix}
=
\begin{bmatrix}
	a_0^+ & \alpha(a_1^+) \\
	a_1^- & \alpha(a_0^-) \\
\end{bmatrix}
\label{eqn:lrcorrespondence}
\end{equation}
can be used as an alternative way to represent the same multivector.

The left projections of $a$ are:
\begin{equation}
\begin{array}{l}
	a P_+ = \bar{A}_{00} P_+ + \bar{A}_{01} e P_+ + \bar{A}_{10} e P_+ + \bar{A}_{11} P_+\\
	a P_- = \bar{A}_{00} P_- + \bar{A}_{01} e P_- - \bar{A}_{10} e P_- - \bar{A}_{11} P_- \\
\end{array}
\end{equation}

From there,
\begin{equation}
\begin{array}{l}
	a P_+ = (\bar{A}_{00} + \bar{A}_{11}) P_+ + (\bar{A}_{01} + \bar{A}_{10}) e P_+ \\
	a P_- = (\bar{A}_{00} - \bar{A}_{11}) P_- + (\bar{A}_{01} - \bar{A}_{10}) e P_-
\end{array}
\end{equation}

or,
\begin{equation}
\begin{matrix*}[l]
	\bar{a}_0^+ = \bar{A}_{00} + \bar{A}_{11}, & \bar{a}_1^+ = \bar{A}_{01} + \bar{A}_{10}, \\
	\bar{a}_0^- = \bar{A}_{00} - \bar{A}_{11}, & \bar{a}_1^- = \bar{A}_{01} - \bar{A}_{10}. \\
\end{matrix*}
\end{equation}

Therefore, the explicit recursive formula for the algorithm in the left-spinor basis is
\begin{equation}
\begin{split}
F_{n}(\bar{A}_{00} + \bar{A}_{01} e_{n-1} + \bar{A}_{10} \tilde{e}_{n-1} + \bar{A}_{11} \tilde{e}_{n-1} e_{n-1}) =
\\
\hfill =
\begin{bmatrix}
	F_{n-1}(\bar{A}_{00} + \bar{A}_{11}) & F_{n-1}(\bar{A}_{01} - \bar{A}_{10}) \\
	F_{n-1}\left(\alpha(\bar{A}_{01} + \bar{A}_{10})\right) &
	F_{n-1}\left(\alpha(\bar{A}_{00} - \bar{A}_{11})\right) 
\end{bmatrix}
\end{split}
\end{equation}

For restoring the original coefficients,
\begin{equation}
\begin{matrix*}[l]
	\bar{A}_{00} = \frac{1}{2}(\bar{a}_0^+ + \bar{a}_0^-) &
	\bar{A}_{01} = \frac{1}{2}(\bar{a}_1^+ + \bar{a}_1^-) \\
	\bar{A}_{10} = \frac{1}{2}(\bar{a}_1^+ - \bar{a}_1^-) &
	\bar{A}_{11} = \frac{1}{2}(\bar{a}_0^+ - \bar{a}_0^-) \\
\end{matrix*}
\end{equation}

The choice between left and right basis ultimately depends on convenience.

\subsection{Structure of the matrix}

Let's denote projectors for the minimal ideals as
\begin{equation}
P_\sigma = \prod_i \frac{1}{2}(1 + (-1)^{\sigma_i} \tilde{e}_i e_i)
\end{equation}
where $\sigma$ is a bit string of length $n$. Since bits of $\sigma$ are responsible for the sign, we will interchangeably use "+" for "0"  and "-" for "1".

The recursive \textbf{left-spinor representation} of $a\in Cl(n,n)$ then can be expressed as
\begin{equation}
a = \sum_{\sigma} \left( \sum_{\rho} \left(a^{\sigma}_\rho e^{\rho} \right) \right) P_\sigma
\end{equation}

Here, indices $\sigma$ and $\rho$ go through all the binary strings of length $n$, and $e^\rho$ is a shorthand for the blade of the form $e^{\rho_0}e^{\rho_1} \cdots e^{\rho_{n-1}}$.

Each $a^{\sigma}_{\rho}$ corresponds (up to a sign) to the matrix entry at column $\rho$ and row $\sigma \oplus \rho$, where $\oplus$ denotes bitwise xor. The example for $Cl(2,2)$:
\begin{equation}
\bordermatrix{
	\sigma &\hfill ++ &\hfill +- &\hfill -+ &\hfill -- \cr
	&\hfill a_{00}^{++} &\hfill a_{01}^{+-} &\hfill a_{10}^{-+} &\hfill a_{11}^{--} \tikzmark{up} \cr
	&\hfill a_{01}^{++} &\hfill a_{00}^{+-} &\hfill a_{11}^{-+} &\hfill a_{10}^{--} \cr
	&\hfill a_{10}^{++} &\hfill - a_{11}^{+-} &\hfill a_{00}^{-+} &\hfill - a_{01}^{--} \cr
	&\hfill - a_{11}^{++} &\hfill a_{10}^{+-} &\hfill - a_{01}^{-+} &\hfill a_{00}^{--} \tikzmark{down} \cr
}
\end{equation}

\tikz[overlay, remember picture] {
	\draw [->] ([xshift=2em,yshift=1.5ex]up) -- ([xshift=2em,yshift=-0.5ex]down) node [midway, right] {$\sigma \oplus \rho$};
}

The \textbf{right-spinor representation} behaves in exactly the same way, except the $\sigma$ becomes the row index, and $\rho \oplus \sigma$ enumerates the columns.

The weight of the index $\rho$ matches the dimensionality of the blade $e^\rho$. This can be used to assign a grade to every entry of the matrix. The parity of this grade is preserved under multiplication by even multivectors. This property is particularly useful for constructing half-spin representations of the Spin group.

\subsection{Complex case}

Everything that applies to the complex algebra $\mathbb{C}l(n,n)$ is valid for $\mathbb{C}l(2n)$ as well, under the following isomorphism $c: \mathbb{C}l(2n) \rightarrow \mathbb{C}l(n, n)$ between the two:
\begin{equation}
\begin{array}{c|c}
	\mathbb{C}l(2n) & \mathbb{C}l(n,n) \\
	\hline
	e'_{2k} & e_k\\
	i \cdot e'_{2k+1} & \tilde{e}_k \\
\end{array}
\end{equation}

\newcommand{\FC}{F_{\mathbb{C}}}
Fast complex matrix representation $\FC$ for $\mathbb{C}l(2n)$ can therefore be constructed as
\begin{equation}
	\FC(x) = F(c(x))
\end{equation}

Any other real or complex $Cl(p,q)$ can be viewed as being isomorphic to a subalgebra of $\mathbb{C}l(2n)$, where $n = \left\lceil\frac{p+q}{2}\right\rceil$. As such, $\FC$ provides the most general way to efficiently represent Clifford algebras over $\mathbb{R}$ and $\mathbb{C}$.

\(\FC\) is implemented in the \textit{clifft} crate \autocite{rumyantsevClifft2024} as the function \texttt{clifft}, and the inverse \(\FC^{-1}\) as \texttt{iclifft}.

\begin{lemma}
	\(\forall n, 0 \le k < 2n\): \(F_{\mathbb{C}}(e_k)\) is Hermitian.
\end{lemma}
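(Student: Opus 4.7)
The plan is to reduce the claim to Lemma \ref{thm:symmetry} via the defining isomorphism $c$, treating the two parities of $k$ separately.

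First I would unpack the definition: $\FC(e'_k) = F(c(e'_k))$. From the table defining $c$, we have $c(e'_{2k}) = e_k$ and, extending $\mathbb{C}$-linearly from $c(i\,e'_{2k+1}) = \tilde{e}_k$, also $c(e'_{2k+1}) = -i\,\tilde{e}_k$. So the two cases to handle are $F(e_k)$ and $-i\,F(\tilde{e}_k)$.

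Next I would observe that in both cases the underlying matrix $F(e_k)$ or $F(\tilde{e}_k)$ has entries in $\{-1, 0, 1\}$ (a consequence of the recursion, since $e_k$ and $\tilde{e}_k$ are basis vectors in the real sub-algebra $Cl(n,n) \subset \mathbb{C}l(n,n)$), i.e.\ they are real matrices. Then I would apply Lemma \ref{thm:symmetry}: $F(e_k)$ is real and symmetric, hence trivially Hermitian, which disposes of the even case $k = 2j$.

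For the odd case $k = 2j+1$, let $B := F(\tilde{e}_j)$, which is real and antisymmetric by Lemma \ref{thm:symmetry}. A short computation shows the conjugate transpose satisfies
\[ (-i B)^{\dagger} = \overline{(-i)}\, B^{T} = (i)(-B) = -i B, \]
so $\FC(e'_{2j+1}) = -i B$ is Hermitian as required.

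I do not expect a serious obstacle; the only subtlety is keeping track of the factor of $i$ produced by $c$ and noting that multiplying a real antisymmetric matrix by $\pm i$ is precisely the operation that turns it into a Hermitian one, which pairs perfectly with the symmetric/antisymmetric dichotomy already established in Lemma \ref{thm:symmetry}.
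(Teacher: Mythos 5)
Your proof is correct and takes essentially the same approach as the paper: split on the parity of $k$, apply Lemma \ref{thm:symmetry}, and observe that a real symmetric matrix and a purely imaginary antisymmetric matrix are both Hermitian. You are in fact a bit more careful than the paper's own two-line proof, which inadvertently swaps the words ``odd'' and ``even'' in the case split.
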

\begin{proof}
	For odd $k$, \(\FC(e'_k) = F(e_{k/2}) \) is symmetric (by lemma \ref{thm:symmetry}) and real;
	for even $k$ \(\FC(e'_k) = F(i \tilde{e}_{(k-1)/2}) \) is antisymmetric and imaginary.
\end{proof}

\section{(Anti)automorphisms of the representations}

\subsection{Parity automorphism and flip of imaginary axes}
Projectors $\frac{1}{2}(1 \pm \tilde{e} e)$ are invariant under the parity transform. Using the decomposition \eqref{eqn:rdec},
\begin{equation}
\alpha(a) = P_+ \alpha(a_0^+) - P_+ e \alpha(a_1^+) + P_- \alpha(a_0^-) - P_- e \alpha(a_1^-)
\end{equation}

In the matrix form, this corresponds to
\begin{equation}
\alpha
\begin{bmatrix}
	a_0^+ & \alpha(a_1^+) \\
	a_1^- & \alpha(a_0^-) \\
\end{bmatrix}
=
\begin{bmatrix}
	\alpha(a_0^+) & -a_1^+ \\
	-\alpha(a_1^-) & a_0^- \\
\end{bmatrix}
\end{equation}

Thus, $\alpha$ only changes the signs of the entries. Inside the full $2^n \times 2^n$ representation of the $Cl(n,n)$ multivector, the sign flip occurs for the entries with index $\rho$ that have odd weight, as they correspond to blades of odd dimensionality. Therefore, every entry of $\alpha(a)$ can be calculated from the corresponding entry of $a$ as follows:
\begin{equation}
(\alpha(a))_{ij} = (-1)^{\left| i \oplus j \right|} a_{ij}
\end{equation}
where ${\left| i \oplus j \right|}$ is the weight of bitwise xor between the row and column indices. As such, the parity automorphism of the representation can be computed in $O(4^n) = O(N)$ time.

The $\alpha$ automorphism of the representations is implemented in the \textit{clifft} crate \autocite{rumyantsevClifft2024} as the function \texttt{parity\_flip}.

Under the reflection of imaginary axes $\beta$ that takes $\tilde{e}_i$ to $-\tilde{e}_i$, the projectors are exchanged:
\begin{equation}
\beta\left(\frac{1}{2}(1 \pm \tilde{e}_i e_i)\right) = \frac{1}{2}(1 \mp \tilde{e}_i e_i)
\end{equation}

Using the decomposition \eqref{eqn:rdec},
\begin{equation}
\beta(a) = P_- \beta(a_0^+) + P_- e \beta(a_1^+) + P_+ \beta(a_0^-) + P_+ e \beta(a_1^-)
\end{equation}

So the representation transforms under $\beta$ as follows:
\begin{equation}
\beta
\begin{bmatrix}
	a_0^+ & \alpha(a_1^+) \\
	a_1^- & \alpha(a_0^-) \\
\end{bmatrix}
=
\begin{bmatrix}
	\beta(a_0^-) & \alpha\beta(a_1^-) \\
	\beta(a_1^+) & \alpha\beta(a_0^+) \\
\end{bmatrix}
\end{equation}

This allows us to build another recursive algorithm for finding $\beta$ of a representation of a multivector:
\begin{enumerate}
	\item Exchange the quarters of the matrix;
	\item Compute $\alpha$ for each quarter;
	\item Recursively compute $\beta$ for each quarter.
\end{enumerate}
\begin{equation}
\begin{bmatrix}
	a_0^+ & \alpha(a_1^+) \\
	a_1^- & \alpha(a_0^-) \\
\end{bmatrix}
\overset{\text{1}}{\rightarrow}
\begin{bmatrix}
	\alpha(a_0^-) & a_1^- \\
	\alpha(a_1^+) & a_0^+ \\
\end{bmatrix}
\overset{\text{2}}{\rightarrow}
\begin{bmatrix}
	a_0^- & \alpha(a_1^-) \\
	a_1^+ & \alpha(a_0^+) \\
\end{bmatrix}
\overset{\text{3}}{\rightarrow}
\begin{bmatrix}
	\beta(a_0^-) & \alpha\beta(a_1^-) \\
	\beta(a_1^+) & \alpha\beta(a_0^+) \\
\end{bmatrix}
\end{equation}

The complexity of this algorithm can be calculated from the recurrence formula $C_{n+1} = O(4^n + 4 \cdot C_n) = O(n \cdot 4^n)$.

The $\beta$ automorphism of the representations is implemented in the \textit{clifft} crate \autocite{rumyantsevClifft2024} as the function \texttt{imaginary\_flip}.

\subsection{Transposition and reversal}

Consider transposition of the right-spinor representation:

\begin{equation}
\begin{bmatrix}
	a_0^+ & \alpha(a_1^+) \\
	a_1^- & \alpha(a_0^-) \\
\end{bmatrix}^T
= \begin{bmatrix}
	{a_0^+}^T & {a_1^-}^T \\
	\alpha({a_1^+}^T) & \alpha({a_0^-}^T) \\
\end{bmatrix}
\end{equation}
	
This is exactly the left-spinor representation \eqref{eqn:lrcorrespondence} of $a^T$. Therefore we can write
\begin{equation}
a^T = {a_0^+}^T P_+ + {a_1^+}^T e P_+ + {a_0^-}^T P_- + {a_1^-}^T e P_-
\end{equation}

Recursively, this applies reversal to each of the decomposition components, but not to the projectors themselves. This is equivalent to applying reversal and exchanging $\tilde{e_i} \rightarrow -\tilde{e_i}$ for every imaginary basis vector.

\begin{equation}
F(a)^T = F(\beta(a^*))
\end{equation}
As such, the representation of the reversal $F(a^*)$ can be computed from the representation $F(a)$ as

\begin{equation}
F(a^*) = \beta(F(a)^T)
\end{equation}
	
The complexity of reversal is bounded by the complexity of $\beta$, so for the proposed algorithm it's equal to $O(n \cdot 4^n)$.

Reversal of the representations is implemented in the \textit{clifft} crate \autocite{rumyantsevClifft2024} as the function \texttt{reversal}.

\printbibliography
\end{document}